\numberwithin{equation}{section}
\newcommand{\Omm}{\Omega_{L,N;m_1,m_2}}
\newcommand{\OLN}{\Omega_{L,N;n_1,n_2}}
\newtheorem{Theorem}{Theorem}
\newtheorem{Remark}{Remark}
\newtheorem{Lemma}{Lemma}
\newtheorem{Definition}{Definition}
\title[Stationary measure of the $q$-Whittaker particle system]{Stationary measure of the driven two-dimensional $q$-Whittaker particle system on the torus}
\author{Ivan Corwin}
\address{Columbia University,
Department of Mathematics,
2990 Broadway,
New York, NY 10027, USA,
and Clay Mathematics Institute, 10 Memorial Blvd. Suite 902, Providence, RI 02903, USA,
and
Institut Henri Poincar\'e,
11 Rue Pierre et Marie Curie, 75005 Paris, France.}
\email{ivan.corwin@gmail.com}
\author{Fabio Lucio Toninelli}
\address{Universit\'e de Lyon, CNRS and Institut Camille Jordan, Universit\'e Lyon 1, 43 bd du 11 novembre 1918, 69622 Villeurbanne, France}
\email{toninelli@math.univ-lyon1.fr}
\begin{document}

\begin{abstract}
We consider a $q$-deformed version of the uniform Gibbs measure on dimers on the periodized hexagonal lattice (equivalently, on interlacing particle configurations, if vertical dimers are seen as
particles) and show that it is invariant under a certain irreversible
{\it $q$-Whittaker dynamic}. Thereby we provide a new non-trivial
example of driven interacting two-dimensional particle system, or of
$(2+1)$-dimensional stochastic growth model, with
explicit stationary measure. We emphasize that this measure is far
from being a product Bernoulli measure. These Gibbs measures and
dynamics both arose earlier in the theory of Macdonald processes
\cite{BC}. The $q=0$ degeneration of the Gibbs measures reduce to the usual uniform dimer measures with given tilt \cite{Kenyon-Okounkov-Sheffield}, the degeneration of the dynamics originate in the study of Schur processes \cite{BF1,BF2} and the degeneration of the results contained herein were recently treated in \cite{Toninelli}.
\end{abstract}
\maketitle

\section{Introduction}

Irreversible Markovian dynamics on two-dimensional dimers or interlaced particle
configurations (see Figure \ref{fig:corrispondenza}) are closely
related to driven interacting particle systems as well as random
surface growth models in $(2+1)$-dimensions. It is a challenge to find
local irreversible dynamics whose invariant measures (on the torus
or on the infinite lattice) are likewise local and explicit. Knowledge of a dynamic's invariant measures can be useful in establishing its hydrodynamic / fluctuation theory and in understanding general properties (i.e. universality classes) of two-dimensional driven systems.

The state space for the Gibbs measures / dynamics we consider is that of interlacing particles on a discrete torus. Each particle interlaces with two particles above it, two particle below it, and has two neighbors at the same row (see Figure \ref{fig:particelle}). For a particle $p$ we let $A_p, B_p, C_p, D_p, E_p, F_p$ denote the absolute value of the horizontal distances between the particle and its neighbors starting with the right neighbor on the same row and going clockwise (actually, the values of $A_p, B_p, D_p, E_p$ are this distance minus 1 -- see Section \ref{sec:particelle} for precise definitions). In continuous time, each particle $p$ jumps to the right by one lattice space according to an exponential clock of rate
$$
\frac{(1-q^{B_p})(1-q^{D_p+1})}{(1-q^{C_p+1})}
$$
where $q\in [0,1)$. This dynamic is local and irreversible. Moreover, it preserves the interlacing of particles -- particle $p$ cannot jump past the particle below and to its right since the factor $1-q^{B_p}=0$ in that scenario; and if a particle $p$ jumps so as to pass its upper-right neighbor, then that neighbor is immediately pushed right by one space since the denominator for that neighbor's jump rate is zero, hence it jumps right at an infinite rate.

The main result of this paper, Theorem \ref{th:invarianza}, provides
an explicit two-parameter family of translation invariant Gibbs measures on the torus which are invariant for this dynamic. One parameter represents the number of particles per row, and the other is topological, related to how much the particles rotate right between rows (in terms of dimers, the parameters relate to the slopes of the height function). The Gibbs measures are local and proportional to
$$
\prod_p\frac{(q;q)_{A_p}}{(q;q)_{B_p}(q;q)_{C_p}}
$$
where the product is over all particles $p$ and where $(q;q)_n =
(1-q)(1-q^2)\cdots (1-q^n)$ is the $q$-Pochhammer symbol. These
dynamics are irreversible (i.e. driven); the proof of the invariance
of the Gibbs measure involves unexpected cancelations of contributions
from every particle. For $q=0$,  the
dynamic reduces to the one introduced by A. Borodin and P. L. Ferrari
\cite{BF1} and the Gibbs measures reduce to uniform
measures over the configurations of interlacing particles. Their stationarity on the torus was proved in \cite{Toninelli} and the argument is simpler than in the $q\in(0,1)$ case.

In general, there is no hope of computing explicitly the stationary
measures of a driven particle system. In the
lucky cases when it is possible, often the  invariant measures turn out to be of
product type. This is the case, for instance, for the ASEP on
$\mathbb{Z}^d,d\geq 1$ and certain modifications thereof
\cite{QuastelValko}, zero range processes \cite{Spitzer, Andjel}, mass
transport models \cite{EvansMajumdarZai}, and certain
discretizations of the one-dimensional KPZ equation including directed polymer models
\cite{SasamotoSpohnSuper,SeppLogGamma,OConnellYor,BetaPolymer}. In
contrast, our stationary measures $\pi$ are far from being product:
for instance, for $q=0$ it is known that they show power-law decaying spatial
correlations and the same presumably holds for $q\ne0$.

The dynamics and Gibbs measures we study in this work were initially considered in the study of Macdonald processes (in fact, $q$-Whittaker processes) \cite[Chapter 3]{BC}, though our results do not rely at all on this technology. Still, let us briefly explain the origins. The $q$-Whittaker process is a measure on interlacing triangular arrays with $1$ particle on the first (bottom) row, $2$ on the second, up to $N$ on the top ($N^{th}$) row. Given the configuration of particles on the top row, the $q$-Whittaker process enjoys the property that the measure on the remaining $N-1$ lower rows is given by the above described Gibbs measure. The dynamics we described were also introduced in \cite[Chapter 3]{BC} and play nice with such measures on triangular arrays. In particular, if one starts with an interlacing triangular array satisfying the Gibbs property, then after running the dynamics for a fixed time, the resulting triangular array also enjoys the same Gibbs property (of course, the top row will have moved to the right). This fact follows from results in \cite[Section 2.3]{BC}.

While our main result does not follow from the results of \cite{BC}, it was partially inspired by it. After all, if the Gibbs property is preserved on these triangular arrays, it would seem reasonable that it should also be preserved on the periodized version of the model. In \cite{BC} a more general class of dynamics and Gibbs measures are discussed which are inhomogeneous with row-dependent parameters $a_k>0$. Our approach applies equally well in that more general context and so we state and prove our main theorem for general $a_k$ (it reduces to the case described above when all $a_k\equiv 1$). There actually exist many types of continuous time Markov dynamics on triangular arrays which preserve the Gibbs property \cite{BorPet}, however the other known dynamics do not easily admit periodic versions (i.e. they are not translation invariant). There is also a discrete time version of the above introduced dynamic described in \cite{BC}, however it is not as simple and we do not pursue studying its periodized version.

The other inspiration for our result is the recent work \cite{Toninelli} regarding the $q=0$ case of this model  (that work was inspired by the Schur process work of
\cite{BF1,BF2}). The $q=0$ case of the dynamics is known to lie in the $(2+1)$-dimensional anisotropic Kardar-Parisi-Zhang universality class \cite{BF1,BF2,Toninelli}.
Based on the approach developed in \cite{Toninelli}, the results of the present paper can be seen as a step towards extending this universality to $q\in (0,1)$. In the $q=0$ case, the next step in \cite{Toninelli} is to extend the invariance of Gibbs measures from the finite torus to infinite volume. A crucial ingredient used in this extension was the fact that, for
$q=0$, the infinite-volume Gibbs states are known and have an explicit determinantal structure and GFF-like height fluctuations \cite{Kenyon-Okounkov-Sheffield}. All of this is missing in the $q\in
(0,1)$ case, so at present, the extension of our result to the infinite lattice and the proof of its KPZ class behavior is an open problem.

Recently, Borodin-Bufetov \cite{BorodinBufetov}
considered another deformation of the Borodin-Ferrari particle system ($q=0$ case of our dynamics) and they
showed that the invariant measures are given by 6-vertex Gibbs
measures. This deformation is different from the one we consider here (ours originates from $q$-Whittaker processes \cite{BC} and the other from vertex models \cite{BorodinR}). Let us also mention that another example of $(2+1)$-dimensional growth
model with explicit (non-product form) stationary measure is the Gates-Westcott model \cite{GatesWestcott, PrahoferSpohn}.

\subsubsection*{Outline} In Section \ref{sec.model} we introduce the
state space for our Markov dynamics (in terms of dimers as well as
interlacing particle configurations) and describe the class of Gibbs
measures we will work with. Section \ref{sec.dynamics} introduces the periodized $q$-Whittaker dynamics. Section \ref{sec.invariance} contains the proof that the Gibbs measures are invariant for the $q$-Whittaker dynamics.

\section{State space and Gibbs measure}\label{sec.model}

The dynamics we study can be described as a driven interacting particle system, or as dynamics of a dimer model on the (periodized) hexagonal lattice. While the former description may be more natural, the latter allows to get more directly certain statements, notably ergodicity of the Markov chain. We start by introducing the state spaces for our dynamics.

\subsection{Description as an interacting particle system}
\label{sec:particelle}
The particle process lives on the $L\times N$ discrete torus $\mathbb T_{L,N}=\mathbb Z/(L \mathbb Z)\times \mathbb Z/(N \mathbb Z)$. The horizontal size is $L$ and the vertical size is $N$. 

The particle configuration space will be denoted $\Omm$, and depends on two integers $1< m_1< L$ and $1\le m_2< N$ such that
\begin{gather}
  \label{eq:26}
m_1/L+m_2/N<1.
\end{gather}
 At each site $x=(x_1,x_2)\in \mathbb T_{L,N}$ there is at most one particle. On each row there are exactly $m_1$ particles. We exclude $m_1=1$ and $m_1=L$ to avoid trivialities. The
 parameter $m_2$ has a more topological nature and its meaning will be explained below.

Particle positions are interlaced, in the following sense. The horizontal position of particle $p$ is denoted  $x_p\in \mathbb Z/(L\mathbb Z)$. Given any $p$ (say on row $i$), we let $p_1,p_4$ denote its right/left neighbor on the same row (note that if $m_1=2$ then $p_1=p_4$). Then, we require that in row $i-1$ there is exactly one particle, denoted $p_2$, whose position satisfies
\begin{eqnarray}\label{eq:1}
x_{p_2}\in \{x_{p}+1,x_p+2,\dots,x_{p_1}\}
\end{eqnarray}
and exactly one particle, denoted $p_3$, satisfying
\begin{gather}
  \label{eq:22}
  x_{p_3}\in \{x_{p_4}+1,x_{p_4}+2,\dots,x_{p}\}.
\end{gather}
See Figure \ref{fig:particelle}. Note that, automatically, in row $i+1$ there are exactly one particle $p_5$ and one
particle $p_6$ satisfying respectively
\begin{gather}
  \label{eq:23}
  x_{p_5}\in \{x_{p_4},\dots,x_p-1\},\quad
x_{p_6}\in \{x_{p},\dots,x_{p_1}-1\}.
\end{gather}

\begin{figure}[ht]
\includegraphics[width=.6\textwidth]{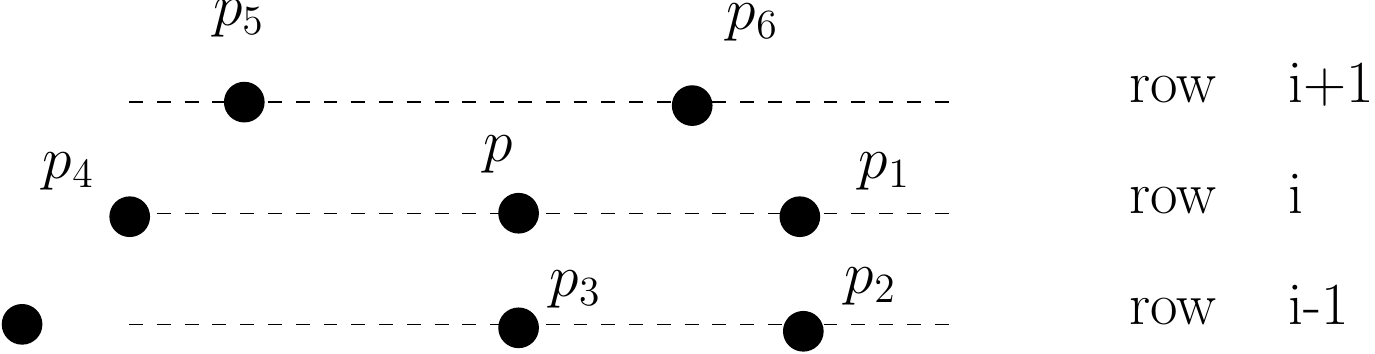}
\caption{
The neighbors $p_1,\dots,p_6$ of particle $p$. Note that conditions
\eqref{eq:1}, \eqref{eq:22} allow $C_p:=x_{p}-x_{p_3}=0$ but they impose
$B_p+1:=x_{p_2}- x_p\ge1$.
}
\label{fig:particelle}
\end{figure}
We define non-negative integers $A_p,\dots,F_p$ as
\begin{eqnarray}
  \label{eq:19}
  &A_p=x_{p_1}-x_{p}-1;\quad
B_p=x_{p_2}-x_p-1;\quad
C_p=x_p-x_{p_3}\\
\nonumber
&D_p=x_{p}-x_{p_4}-1;\quad
E_p=x_p-x_{p_5}-1;\quad
F_p=x_{p_6}-x_p.
\end{eqnarray}
The particles $p_1,\dots,p_6$ are the six neighbors of $p$, labeled clockwise starting from the one on the right. The definition of the dynamics will be such that the labels of the neighbors of a particle $p$ do not change with time.

Let $\Omega_{L,N;m_1}$ be the set of particle occupation functions, i.e. of functions $\eta:\mathbb T_{L,N}\mapsto \{0,1\}$, with $m_1$ particles per row, whose positions satisfy the constraints \eqref{eq:1}-\eqref{eq:23}. The set $\Omega_{L,N;m_1}$ decomposes into disjoint ``sectors'':
\begin{gather}
  \label{eq:24}
  \Omega_{L,N;m_1}=\cup_{m_2}\Omm
\end{gather}
as follows. Given any particle $p$, connect $p$ to its up-right neighbor $p_6$, then $p_6$ with its own up-right neighbor and repeat the operation until the path $\Gamma$ thus obtained gets back to the starting particle $p$. Note that $\Gamma$ forms a simple loop: otherwise, there would be a particle $r$ which is reached along $\Gamma$ from two different particles $r',r''$. This is impossible, since both $r'$ and $r''$ would be the $r_3$ neighbor of $r$. Call $N_v\in\mathbb N\cup\{0\},N_h\in\mathbb N\cup\{0\}$ the vertical and horizontal winding numbers  of $\Gamma$ around the torus $\mathbb  T_{L,N}$. It is easy to see that $N_h,N_v$ are independent of the chosen initial particle $p$.
It is possible to check (see Remark \ref{rem:km} below) that
\begin{gather}
  \label{eq:27}
  m_2:=m_1\frac{N_h}{N_v}
\end{gather}
is an integer and that it satisfies \eqref{eq:26}. The set $\Omega_{L,N;m_1,k}$ is the subset of $\Omega_{L,N;m_1}$ such that $ m_2=k$. Each sector $\Omm$ will remain invariant under our dynamics.




Given $q\in [0,1)$ and a collection of real parameters $a_1,\ldots, a_N>0$, let $\pi$ be the probability measure on
$\Omm$ defined as
\begin{eqnarray}
  \label{eq:3}
  \pi(\sigma):=\pi_{L,N;m_1,m_2}(\sigma):=\frac1{Z_{L,N;m_1,m_2}}\prod_p
  a_{r(p)}^{C_p} \frac{(q;q)_{A_p}}{(q;q)_{B_p}(q;q)_{C_p}}{\bf 1}_{\{\sigma\in \Omm\}}
\end{eqnarray}
where $(q;q)_n=(1-q)(1-q^2)\dots (1-q^n)$ and where $r(p)$ denotes the label of the row in which $p$ sits.

Conditionally on the position of all particles except particle $p$, the law of the position of $p$ is proportional to
\begin{eqnarray}
  \label{eq:5}
  a_{r(p)}^{C_p}a_{r(p_6)}^{F_p}\, \frac{(q;q)_{A_p}(q;q)_{D_p}}{(q;q)_{B_p}(q;q)_{C_p}(q;q)_{E_p}(q;q)_{F_p}}.
\end{eqnarray}
Note also that in \eqref{eq:3} we could have for instance replaced
$(q;q)_{B_p}(q;q)_{C_p}$ by $(q;q)_{E_p}(q;q)_{F_p}$ and $\pi$ would
be unchanged. Likewise, because $\sum_{p:r(p)=k} C_p+B_p = N-m_1$ we
could have replaced $a_{r(p)}^{C_p}$ by
$a_{r(p)}^{\alpha C_p- (1-\alpha) B_p}$ for any $\alpha$ and not
changed $\pi$ either.  When $q=0$ and all $a_k\equiv 1$, $\pi$ reduces
to the uniform distribution on $\Omm$.

\begin{Remark}
Note that we are viewing $\Omm$ as a set of particle occupation functions and not as a set of positions of labeled particles; i.e. two particle configurations with the same particle occupation
variable everywhere are considered to be the same. If instead we looked at configurations of labeled particles, every configuration in $\Omm$ would actually correspond to $m_1$ different possible particle configurations, obtained by cyclically changing particle labels, simultaneously on every row.
\end{Remark}




\subsection{Description as a dimer model}
Before we introduce the dynamics, let us give the alternative description of the model in terms of perfect matchings (or dimer coverings) of the periodized hexagonal lattice $\Lambda_{L,N}$.
Precisely, with reference to Figure \ref{fig:grafo},  $\Lambda_{L,N}$ has period $L$ in the $e_1$ direction and period $N$ in the $e_2$ direction, and vertices are alternately colored black/white.

\begin{figure}[ht]
\includegraphics[width=.7\textwidth]{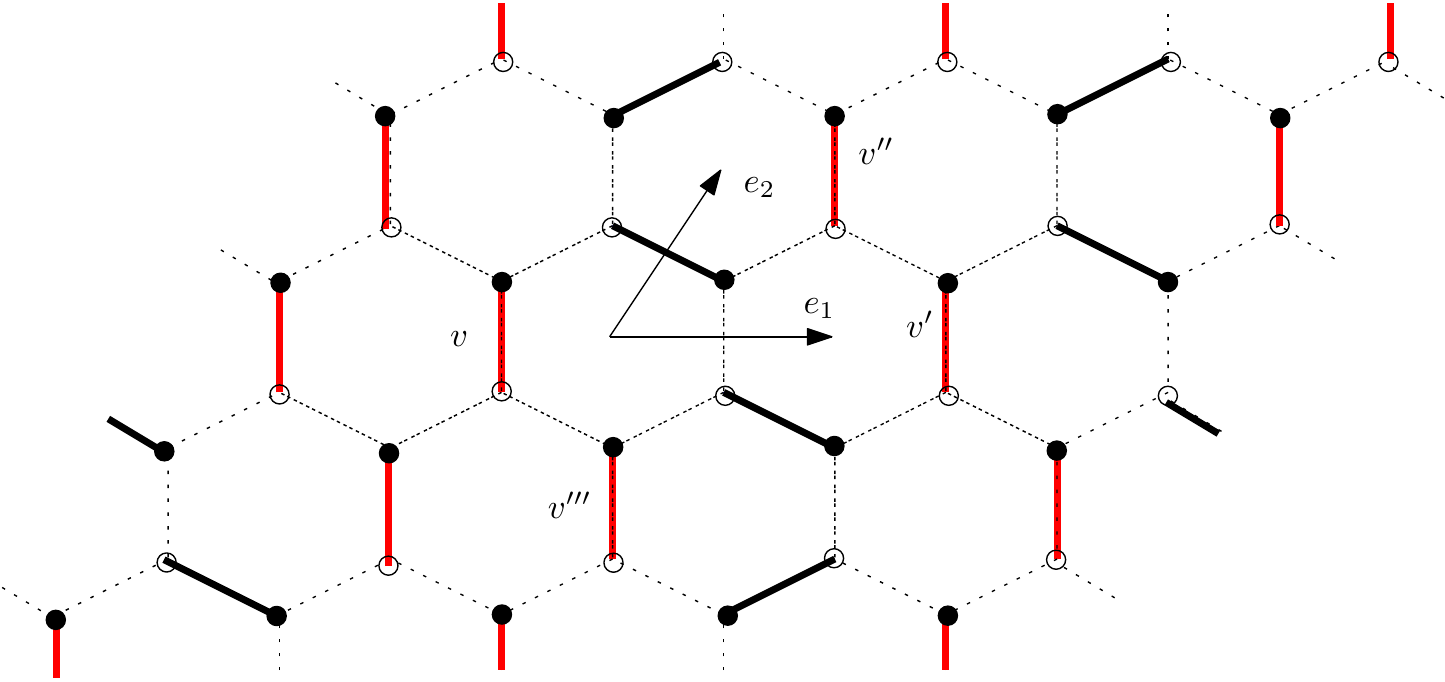}
\caption{
The graph $\Lambda_{L,N}$ with $L=5,N=4$. The graph is periodic in both $e_1$ and $e_2$ directions. Vertical dimers (in red) will be called ``particles''. Here, $v'=v+2 e_1, v''=v+e_2+e_1$ and $v'''=v-e_2+e_1$.
}
\label{fig:grafo}
\end{figure}
The number of dimers (i.e. of edges in the perfect matchings) equals $LN$, which is the number of white vertices. Given strictly positive integers $n_1,n_2 $ with $n_1+n_2<N L,$ let $\OLN$ be the set of dimer coverings $X$ with $n_1$ vertical dimers and $n_2$ north-west oriented dimers. It is well known that the number of vertical dimers is the same in each horizontal row, so that $k_1:=n_1/N\in \mathbb N$, and similarly there are $k_2:=n_2/L\in \mathbb N$ north-west oriented dimers in each $e_2$-oriented column. To avoid trivialities we will also assume that $k_1\ge 2$ (recall that in the particle picture we required $m_1\ge2$ for the same reason). Another well-known fact is that the knowledge of which vertical edges are occupied uniquely determine the whole dimer configuration. A last observation is that the horizontal (i.e. $e_1$) coordinates of vertical dimers are interlaced: given two vertical edges $v$ and $v'=v+k e_1,k>0$ such that there is a dimer at $v,v'$ and no dimer  at $v+je_1,
j=1,\dots,k-1$, then there is exactly one value $0\le r<k$ and one value $0<s\le k$ such that there is a dimer at $v''=v+e_2+r e_1$ and at $v'''=v-e_2+s e_1$. See Figure \ref{fig:grafo}.

\begin{Remark}
\label{rem:km}
If vertical dimers are called ``particles'', then the correspondence between the particle picture of Section \label{sec:part} and the dimer picture should now be clear, see also Figure  \ref{fig:corrispondenza}: for each particle configuration in $\Omm$ there is a perfect matching of $\Lambda_{L,N}$ with certain values $(k_1,k_2)$, and vice-versa. It is obvious that $m_1=k_1$.
It remains to be shown that also $m_2=k_2$, thereby proving also \eqref{eq:26}, since $L k_2+N k_1=n_1+n_2<NL$ by assumption. Given a nearest-neighbor path $C_{f\to f'}$ from a face $f$ to a face $f'$ and a perfect matching $X$, define
\begin{gather}
  \label{eq:25}
  H_X(C_{f\to f'})=\sum_{e\in C_{f\to f'}}\epsilon_e {\bf 1}_{e\in X}
\end{gather}
where the sum runs over the edges crossed by the path, $\epsilon_e$ equals $+1/-1$ if $e$ is crossed with the white vertex on the right/left and ${\bf 1}_{e\in X}$ is the indicator function that $e$ is occupied by a dimer in $X$. It is well known \cite{Kenyon} that if $f'=f$ then, given $X$, $H_X(C_{f\to f})$ depends only on the horizontal and vertical winding numbers of $C_{f\to f}$ around the torus. Choose a path $C^{(1)}_{f\to f}$ that moves only in the $+e_1$ direction. Then, we see that
\begin{gather}
  \label{eq:28}
  H_X(C^{(1)}_{f\to f})= +k_1.
\end{gather}
If instead $C^{(2)}_{f\to f}$ it moves only in the $+e_2$ direction, then
\begin{gather}
  \label{eq:29}
 H_X(C^{(2)}_{f\to f})=-k_2.
\end{gather}
On the other hand, choose $C^{(\Gamma)}_{f\to f}$ as follows: it
starts from a face $f$ just to the right of a vertical dimer $p$, it
moves in direction $+e_2$ if this involves crossing no dimer, and in
the direction $+e_1$ otherwise; the paths stops when it gets back to
$f$. Let us say that a vertical dimer $r $ ``is visited by
$C^{(\Gamma)}_{f\to f}$'' if the path visits the hexagonal face just
to the right of $r$. Note that the first visited vertical dimer is
$p$, the second one is its up-right neighbor $p_6$, then the up-right
neighbor of $p_6$, and so on. These are just the particles visited by
the  path $\Gamma$ built in Section \ref{sec:particelle}. In
particular, we have seen that $\Gamma$ forms a simple loop, so
$C^{(\Gamma)}_{f\to f}$ does indeed come back to $f$. Also, $C^{(\Gamma)}_{f\to f}$  has the same winding numbers $N_h,N_v$ as $\Gamma$. By construction, we see that $H_X(C^{(\Gamma)}_{f\to f})=0$ since the path never crosses any dimer. On the other hand, by \eqref{eq:28}-\eqref{eq:29} we have
\begin{gather}
  \label{eq:30}
 0= H_X(C^{(\Gamma)}_{f\to f})=N_h k_1-N_v k_2,
\end{gather}
i.e. $k_2=m_2$, see \eqref{eq:27}.
\end{Remark}

\begin{figure}[ht]
\includegraphics[width=.32\textwidth]{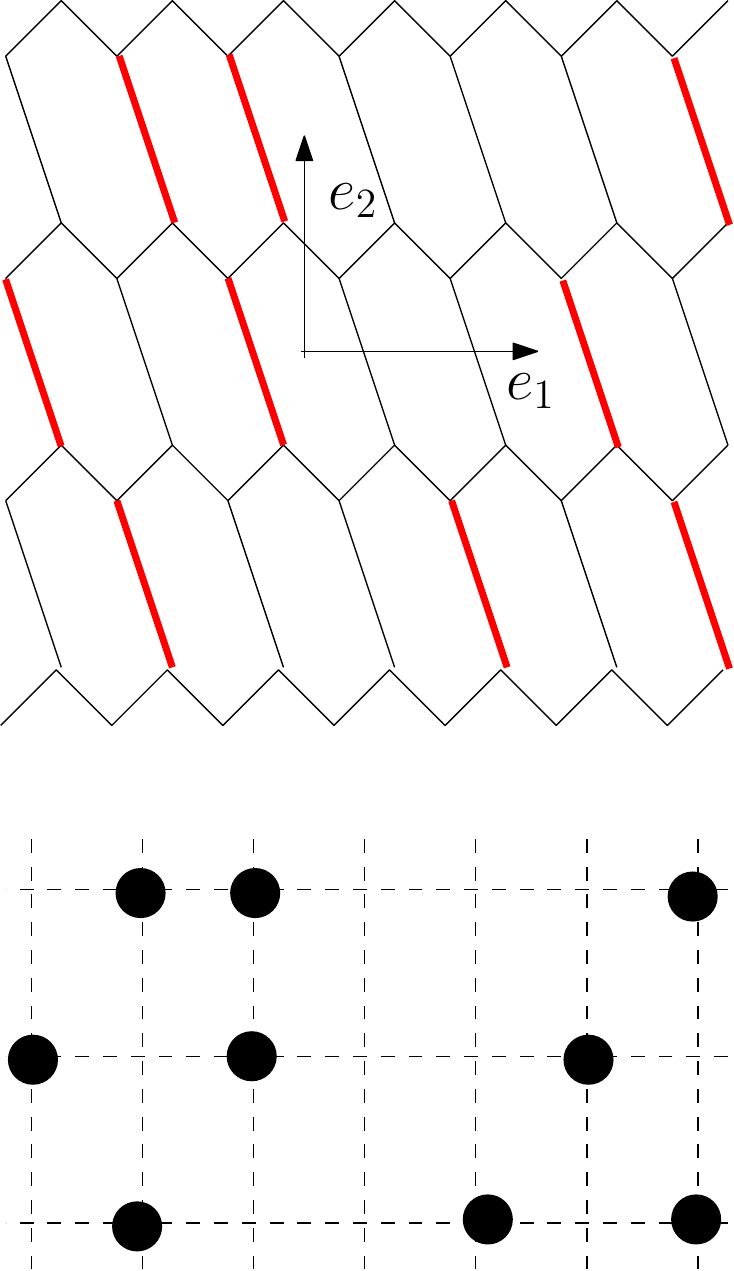}
\caption{
The correspondence between (a portion of) perfect matching and interlacing particle configuration. The mapping is made more evident by an affine deformation of the hexagonal lattice such that the axes
$e_1,e_2 $ become orthogonal}
\label{fig:corrispondenza}
\end{figure}

In analogy with Section \ref{sec:particelle}, for each vertical dimer $p$, let $p_1, \dots p_6$ be the labels of its six neighboring vertical dimers labeled  in clockwise order starting from the one on the right. The non-negative (possibly zero) integers $A_p,$ $B_p,C_p,D_p,E_p,F_p$ defined in \eqref{eq:19} are given in the coordinates of the hexagonal lattice as follows: if $p$ is at vertical edge $v$, then
\begin{align*}
&p_1  \text{ is at edge } v+(A_p+1) e_1; &&p_2 \text{ is at edge } v-e_2+ (B_p+1)e_1\\
&p_3 \text{ is at edge } v-e_2-C_p e_1; &&p_4 \text{ is at edge } v- (D_p+1) e_1\\
&p_5 \text{ is at edge }v+e_2-(E_p+1) e_1; &&p_6 \text{ is at edge } v+e_2+F_pe_1.
\end{align*}

\section{Periodized $q$-Whittaker dynamics}\label{sec.dynamics}
We saw that the descriptions in terms of interlaced particles on the torus $\mathbb T_{L,N}$ or of perfect matchings of $\Lambda_{L,N}$ are equivalent, if we set $(m_1,m_2)=(k_1,k_2)$. We will interchangeably use the former or the latter representation.
\begin{Definition}
\label{def:famiglie}
Given a configuration $\eta\in \Omm$, draw a directed upward edge from any particle $r$ to its up-right neighbor $r_6$ if $F_r=0$ and a downward edge from $r$ to $r_3$ if $C_r=0$ (in both cases, we draw an edge between particles in neighboring lines, with the same horizontal position). For each particle $p$ let  $V^+_p$ (resp. $V^-_p$) be the set that includes $p$ plus the particles that can be
reached from $p$ by following upward (resp. downward) oriented edges.
\end{Definition}

\begin{Remark}
From the assumption $n_1,n_2>0$ it follows that $|V^\pm_p|\le N-1$ for every $p$. In fact, if say $|V^+_p|\ge N$ then the upward edges starting from $p$ form a loop and actually $|V^+_p|=N$ (in each row there is at most one particle $p'$ with the same horizontal position $x_{p'}$ as $p$). In this case, the path $\Gamma$ defined just after \eqref{eq:24} has zero horizontal winding number, so from  \eqref{eq:27} we have $m_2=n_2=0$. Therefore, the up/down arrows do not form loops and we can identify $r^+_p$, the highest particle in $V^+_p$, and  $r^-_p$, the lowest particle in $V^-_p$.
\end{Remark}

The dynamics we consider is a continuous-time Markov chain on $\Omm$. The updates consist in shifting by $+e_1$ all particles in one of the families $V^+_p$. Such move happens with rate
\begin{eqnarray}
  \label{eq:6}
  a_{r(p)} \, \frac{(1-q^{B_p})(1-q^{D_p+1})}{(1-q^{C_p+1})},
\end{eqnarray}
where $q\in [0,1)$ and $a_1,\ldots, a_N>0$ are the same real parameters as in the definition of the Gibbs measure \eqref{eq:3}. Recall that $r(p)$ is the row associated to particle $p$.
Note that the rate is zero if $B_p=0$. This prevents particles from overlapping after the move, see Figure \ref{fig:ancoraO} (b). Note also that after the move, the configuration is still in
$\OLN$. Indeed, if $|V_p^+|=1$, shifting $p$ by $+e_1$ corresponds to rotating three dimers around a hexagonal face (see Figure \ref{fig:ancoraO} (a)), so $n_1,n_2$ remain constant. In the general case, if $B_p>0$ the shift of $V_p^+$ by $e_1$ can be obtained by shifting $+e_1$ the particles of $V_p^+$ one by one, starting from the top one, i.e. $r_p^+$. Also in this case, $n_1,n_2$ are unchanged.

\begin{figure}[ht]
\includegraphics[width=.7\textwidth]{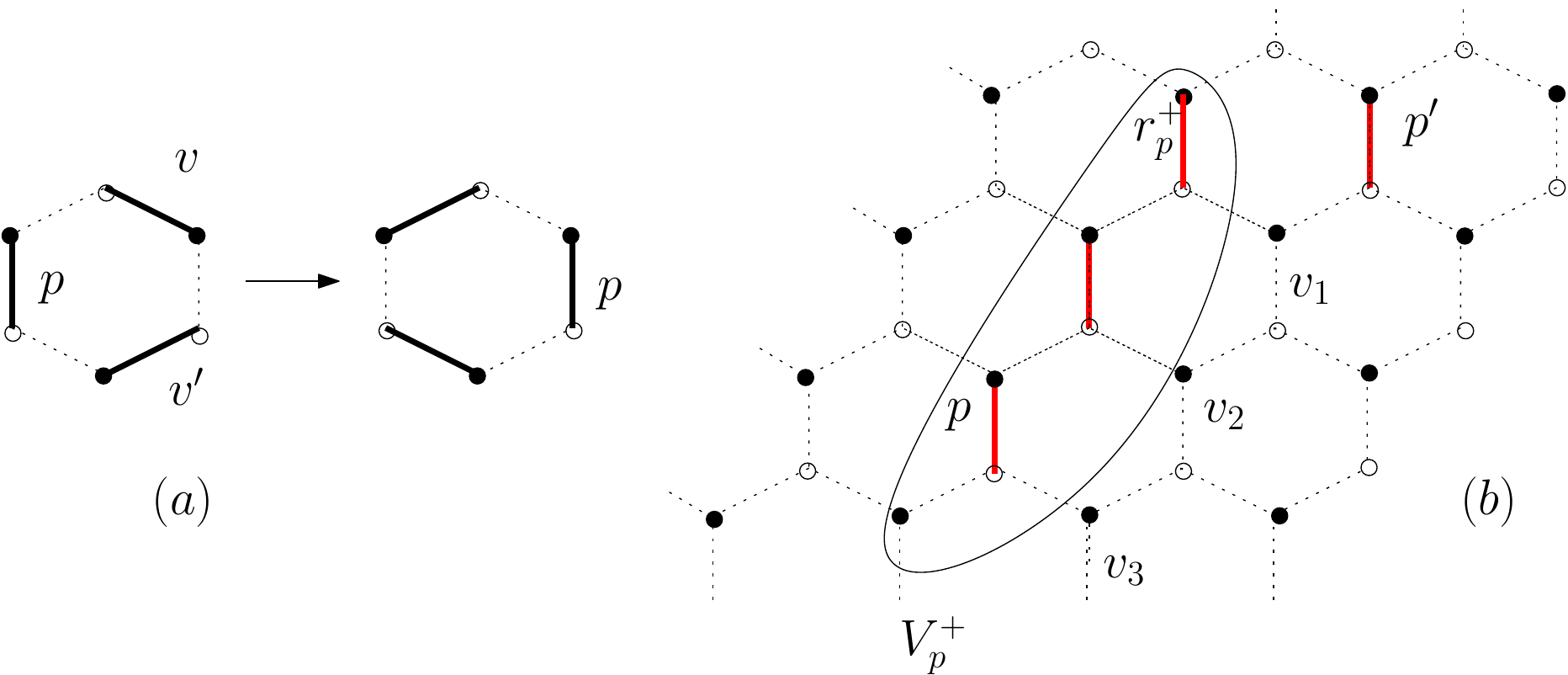}
\caption{(a): If $|V_p|=1$ then necessarily edge $v$ is occupied and if $B_p\ne 0$ then $v'$ is also occupied. The allowed shift of $e_1$ then corresponds to the rotation of these three dimers.
(b): A particle $p$ with the associated set $V^+_p$, whose highest particle is $r^+_p$. The shift of  $V^+_p$  by $e_1$ should be forbidden, otherwise particles $r^+_p$ and $p'$ would overlap. Indeed, by the particle interlacing condition note that edges $v_1,v_2,v_3$ are necessarily occupied by particles. In particular, $B_p=0$  because $v_3$ is occupied, so that the rate \eqref{eq:6} is zero.}
\label{fig:ancoraO}
\end{figure}

\begin{Remark}
For $q=0$ and $a_k\equiv 1$, the dynamics was studied in
\cite{BF1,BF2,Toninelli} (in this case, $q^{B_p}$ should be
interpreted as $1$ when $B_p=0$). As we already remarked, in this case $\pi=\pi_{L,N;m_1,m_2}$ is the uniform measure over $\Omm$. In \cite{Toninelli}, ``particles'' were associated to north-west oriented dimers, rather than to vertical ones as in the present work. With the convention of \cite{Toninelli}, updates consist in a single particle jumping a distance $n\ge1$ in the $-e_2$ direction, instead of $n\ge 1$ particles jumping a distance $1$ in the $+e_1$ direction.
\end{Remark}

A preliminary observation:
\begin{Lemma}
\label{lemma1}
  The Markov
chain is ergodic on $\OLN$. More precisely, one can go from any
$\eta \in \OLN$ to any $\eta'\in \OLN$ via a chain of elementary moves
where a single particle jumps by $ +e_1$.
\end{Lemma}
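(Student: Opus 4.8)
The plan is to establish ergodicity by exhibiting a canonical ``ground state'' configuration in each sector $\OLN$ and showing that any configuration can be driven to it by a finite sequence of elementary single-particle $+e_1$ jumps, each of which keeps the configuration inside $\OLN$; irreducibility then follows by reversing paths (the elementary moves are clearly reversible as abstract transitions on the configuration graph, even though the dynamics itself is irreversible). First I would fix $(m_1,m_2)$ and describe a distinguished configuration $\eta^*$: since the winding data $(N_h,N_v)$ and the per-row particle count $m_1$ are the only invariants, one can take the ``most packed to the left'' configuration, in which the $m_1$ particles of the bottom row sit at positions $0,1,\dots,m_1-1$ and each successive row is obtained by the minimal rightward shift compatible with the interlacing constraints \eqref{eq:1}--\eqref{eq:23} and with realizing the correct winding. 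I would check that such a configuration exists and is unique given $(m_1,m_2)$, using Remark \ref{rem:km} to match $m_2$ with $k_2$.

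The core of the argument is a monotonicity/height-function argument. Working in the dimer picture, to each $X\in\OLN$ associate its height function $H_X$ (well defined up to the global additive/winding data, which is fixed within the sector). An elementary $+e_1$ move of a single particle corresponds to rotating three dimers around a hexagonal face, which changes $H_X$ by $\pm1$ at a single face and leaves it unchanged elsewhere; the move is admissible precisely when it decreases (say) the height at a local maximum. The standard fact that any two height functions with the same boundary/winding data are connected by such local moves — together with the observation that one can always strictly decrease $\sum_f H_X(f)$ until reaching the minimal height function, which corresponds to $\eta^*$ — gives a monotone path of elementary moves from any $\eta$ to $\eta^*$. I would present this in the particle language as well: a particle $p$ with $V_p^+=\{p\}$ and $B_p>0$ can always be found unless the configuration is already $\eta^*$, and shifting it $+e_1$ strictly decreases the potential $\Phi(\eta):=\sum_p (\text{position of }p)$, which is bounded below; hence $\eta^*$ is reached in finitely many steps.

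The step I expect to be the main obstacle is verifying that admissible single-particle moves are never ``stuck'': that for any $\eta\ne\eta^*$ there exists a particle $p$ with $|V_p^+|=1$ and $B_p\ge1$ whose shift strictly decreases $\Phi$. One must rule out the possibility that every movable configuration block is rigidly locked by the interlacing constraints with the ground state not yet reached; this is exactly where the dimer height-function viewpoint pays off, since a non-minimal height function always has a local maximum at which a down-flip is legal, and one translates that flip back into the existence of the desired particle $p$. I would also need the easy bookkeeping that the elementary moves stay within a single sector (already noted after \eqref{eq:24} and in Section \ref{sec.dynamics}) and that reversibility of the move graph upgrades ``every $\eta$ connects to $\eta^*$'' to full irreducibility between arbitrary $\eta,\eta'$.
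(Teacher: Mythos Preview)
Your height-function strategy is the right one and is essentially what the paper uses, but the ``reversing paths'' step is a genuine gap. The lemma asks for connectivity via $+e_1$ moves only. Showing that every $\eta$ reaches a fixed ground state $\eta^*$ by $+e_1$ moves does \emph{not} let you reverse a path $\eta'\to\eta^*$ into a path $\eta^*\to\eta'$ made of $+e_1$ moves: the inverse of a $+e_1$ jump is a $-e_1$ jump, which is not in the move set. On the torus there is no cheap trick (wrapping around, shifting everyone else) that converts a single $-e_1$ move into a sequence of $+e_1$ moves leaving the rest of the configuration intact, so the detour through $\eta^*$ does not yield strong connectivity of the directed transition graph. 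A related symptom: your potential $\Phi(\eta)=\sum_p(\text{position of }p)$ is not well defined on $\mathbb Z/L\mathbb Z$, and any naive lift of it would be \emph{increased}, not decreased, by a $+e_1$ jump; so there is no monotone descent to a unique $\eta^*$ in the first place.

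The paper fixes both issues at once by comparing $\eta$ directly to the \emph{target} $\eta'$ rather than to a ground state. It takes the relative height function $h_{\eta,\eta'}$ (well defined on the torus because $\eta,\eta'$ lie in the same sector), normalizes it to be nonnegative, and shows that whenever $h_{\eta,\eta'}\not\equiv 0$ there is a face at which the elementary rotation---a $+e_1$ move of a single particle---lowers $h_{\eta,\eta'}$ by $1$. Iterating drives $\eta$ straight to $\eta'$ using only $+e_1$ moves, with $\sum_f h_{\eta,\eta'}(f)$ as the Lyapunov quantity; no reversal is needed. The existence of a flippable face, which you correctly flag as the crux, is handled by the paper's Lemma \ref{lemma:loop}: from any face of positive height one follows edges unoccupied in $\eta$ with the white vertex on the left (so $h_{\eta,\eta'}$ is nondecreasing along the path); this path cannot close into a loop, since that would force some edge to be vacant in \emph{every} $X\in\OLN$, contradicting $n_1,n_2,NL-n_1-n_2>0$; hence it terminates at a face where all three ``left'' edges are occupied and the rotation is legal.
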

\begin{proof}[Proof of Lemma \ref{lemma1}]
First let us define the height function $h_{\eta,\eta'}$ of $\eta$ respective to $\eta'$ \cite{Kenyon}: this is defined on hexagonal faces $f$ and its gradients are given by
\begin{align}
  \label{eq:21}
  h_{\eta,\eta'}(f')-  h_{\eta,\eta'}(f)&=H_\eta( C_{f\to
  f'})-H_{\eta'}( C_{f\to
  f'})\\
  &\nonumber =
\sum_{v\in C_{f\to
  f'}}\epsilon_v ({\bf 1}_{v\in \eta}-{\bf 1}_{v\in\eta'})
\end{align}
where $H_\eta( C_{f\to
  f'})$ was defined in \eqref{eq:25}.

The r.h.s. of \eqref{eq:21} is independent of $C_{f\to f'}$: this amounts to the statement that $  h_{\eta,\eta'}(f')- h_{\eta,\eta'}(f)=0$ if $f'=f$, which follows from the second equality in \eqref{eq:30} because both $\eta,\eta'$ are in $\OLN$ with the same values of $n_1=N k_1,n_2=L k_2$. The height function is therefore well-defined modulo an additive constant: let us fix this constant by establishing that $h_{\eta,\eta'}\ge 0$ and that it vanishes at some face that we call $\bar f$. Given any $f$ such that $h_{\eta,\eta'}(f)>0$, construct a path $f=f_0, f_1, f_2,\dots$ with the rule that the edge traversed from $f_i$ to $f_{i+1}$ is \emph{not} occupied in $\eta$ and has the white vertex on the left. We see then that $h_{\eta,\eta'}$ is non-decreasing along this path.
We have:
\begin{Lemma}
\label{lemma:loop}
The path $f=f_0, f_1, f_2,\dots$ cannot contain a closed loop.
\end{Lemma}
As a consequence, it must be the case that after a finite number $k$ of steps the path cannot be continued. In this case, this implies that at face $f_k$ it is possible to rotate the three dimers (all three edges with white vertex on the left are occupied, as in Figure \ref{fig:ancoraO} (a)): the vertical one moves by $e_1$ and the height at $f_k$ decreases by $1$. Repeat the procedure until $h_{\eta,\eta'}$ is zero everywhere.

\end{proof}

\begin{proof}[Proof of Lemma \ref{lemma:loop}]
Assume by contradiction that the path contains a loop $C_{g\to g}$. We know that for every $X\in \OLN$ we have
\begin{gather}
  \label{eq:31}
  H_X(C_{g\to g})=H_\eta(C_{g\to g}).
\end{gather}
Since all edges traversed by $C_{g\to g}$ have the white site on the left, $ H_X(C_{g\to g})$ equals minus the number of traversed dimers. For $\eta$, we know that no dimer is traversed, so we conclude that none of the dimers traversed by $C_{g\to g}$ is occupied in any configuration $X\in \OLN$. This leads to a contradiction with our assumption $n_1,n_2,NL-n_1-n_2>0$. Suppose for instance that a vertical edge is contained in no  $X\in \OLN$: by translation invariance, this would imply $n_1=0$. Similarly, the assumption $n_2>0$ and $LN-n_1+n_2>0$ exclude the case where some
given non-horizontal edge is contained in none of the configurations $X\in \OLN$.
\end{proof}




\section{Invariance of Gibbs measures}\label{sec.invariance}

Our main result is:
\begin{Theorem}
\label{th:invarianza}
The probability law $\pi:=\pi_{L,N;m_1,m_2}$ is stationary in time.
\end{Theorem}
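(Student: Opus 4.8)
The plan is to verify stationarity by checking the master equation directly on the generator: for every configuration $\sigma \in \Omm$ we must show that the total rate of leaving $\sigma$ weighted by $\pi(\sigma)$ equals the total rate of entering $\sigma$ from all configurations $\sigma'$ that reach $\sigma$ in one elementary move, weighted by $\pi(\sigma')$. Since the dynamics moves a whole family $V^+_p$ rightward by $e_1$, a configuration $\sigma$ can be reached by moving some family $V^+_{p'}$ in the appropriate predecessor $\sigma'$; the bijection between ``out'' moves and ``in'' moves is indexed by the particle $p$ whose family is moved. After dividing through by $\pi(\sigma)$, the identity to prove takes the schematic form $\sum_p \mathrm{rate}_{\mathrm{out}}(\sigma,p) = \sum_p \frac{\pi(\sigma'_p)}{\pi(\sigma)}\mathrm{rate}_{\mathrm{in}}(\sigma'_p,p)$, where $\sigma'_p$ is obtained from $\sigma$ by shifting $V^+_p$ left by $e_1$.

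The key computational step is to evaluate the Radon--Nikodym factor $\pi(\sigma'_p)/\pi(\sigma)$ using the product formula \eqref{eq:3}. Shifting the family $V^+_p$ changes only the local variables $A,B,C,D,E,F$ of particles adjacent to the family: the bottom particle $p$ of the family and its neighbors $p_1,p_2,p_3,p_4$, together with the analogous neighbors of the top particle $r^+_p$. Because the $(q;q)_\bullet$ symbols telescope and the $a_{r(\cdot)}^{C_\cdot}$ weights only see the row index, most factors cancel and one is left with an explicit rational function of $q$ in the few boundary variables. The natural strategy, following the $q=0$ treatment in \cite{Toninelli}, is \emph{not} to match terms one $p$ at a time but to combine the whole sum over $p$: after substituting the ratio $\pi(\sigma'_p)/\pi(\sigma)$ and the rate \eqref{eq:6}, both sides become sums indexed by particles, and one shows they agree by exhibiting a cancellation in which the contribution attached to each particle $p$ on the left is rearranged to meet a matching contribution on the right. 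Concretely I would first treat the generic case where all families have size one (pure hexagon flips), reduce the identity to a finite $q$-Pochhammer identity in the variables $A_p,B_p,C_p,D_p,E_p,F_p$ of a single particle and its neighbors, and verify it; then handle the general case by noting that a size-$k$ family move is a composition of $k$ single-particle shifts performed top-to-bottom, so the telescoping persists.

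The main obstacle I expect is precisely the bookkeeping of which $A,B,C,D,E,F$ variables change, and by how much, when a family $V^+_p$ is shifted, together with making sure the sum over $p$ really does close up without leftover terms. Two subtleties deserve care: first, the role of the denominator $1-q^{C_p+1}$ in the rate \eqref{eq:6}, which is exactly what makes the ``infinite-rate push'' of an upper-right neighbor consistent and which should cancel against a factor $(q;q)_{C_p}$ appearing in the measure ratio; second, the fact that $\Omm$ is a set of \emph{unlabeled} occupation configurations, so when summing over incoming/outgoing transitions one must not overcount the cyclic relabelings. A clean way to organize the argument is to fix a reference particle in each family and phrase the detailed computation in the dimer picture, where an elementary move is a local rotation of three edges and the height-function language of Lemma \ref{lemma1} makes the geometry transparent. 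Once the single-particle identity is pinned down, extending to general $a_k$ is immediate since the row weights contribute only the factor $a_{r(p)}$ already present in \eqref{eq:6}, and the $q=0$ specialization reproduces the result of \cite{Toninelli} as a consistency check.
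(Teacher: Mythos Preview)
Your overall framework---balance outgoing against incoming mass at each configuration, compute the measure ratio via telescoping $q$-Pochhammer factors, and reduce to a sum identity over particles---is the right one and matches the paper. But your parametrization of the predecessors is wrong and would derail the computation. You propose to obtain $\sigma'_p$ by shifting the \emph{upward} family $V^+_p(\sigma)$ (computed in the current configuration) to the left. This is not a bijection with predecessors: if in some $\sigma'$ a family with top particle $r$ having $F_r(\sigma')=1$ is shifted right, then in the resulting $\sigma$ one has $F_r(\sigma)=0$, so $V^+$ through the bottom particle has grown, and shifting it back left does not recover $\sigma'$. The correct indexing, which the paper uses, is by the \emph{top} particle $p$ of the moved family, and one shifts the \emph{downward} family $V^-_p(\sigma)$ left; one checks that after any move the particles that moved are exactly $V^-_{r^+}$ in the new configuration (the internal $C$'s remain zero and the bottom $C$ became strictly positive). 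With this fix the telescoping yields the clean per-particle contribution $a_{r(p)+1}(1-q^{A_p+1})(1-q^{E_p})/(1-q^{F_p+1})$, and the stationarity condition becomes $S_1(\sigma)=S_2(\sigma)$ for the two explicit sums over particles.

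For the identity $S_1=S_2$, your plan (``reduce to a finite $q$-Pochhammer identity at a single particle and its neighbours, then handle larger families by composition'') is not the paper's argument and, as stated, does not close. The composition-of-single-shifts idea is only relevant to the \emph{ratio} computation, not to the final sum identity, and the identity is not term-by-term. The paper instead proves that $S_1-S_2$ is \emph{constant} on $\Omm$: using Lemma~\ref{lemma1} it suffices to check invariance under a single-particle $+e_1$ shift, and this is done by letting the particle's horizontal coordinate vary continuously and verifying $\partial_s(S_1-S_2)=0$ via an explicit local computation at $p$ and its six neighbours; the constant is then zero by conservation of probability. (Incidentally, a direct route also works: reindex $S_2$ via $p\mapsto p_3$ using $A_{p_3}=B_p+C_p$, $E_{p_3}=D_p-C_p$, $F_{p_3}=C_p$ to get $S_1-S_2=\sum_p a_{r(p)}(q^{D_p-C_p}-q^{B_p})$, which vanishes after one more reindexing since $D_p-C_p=B_{p_4}$. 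If that is the cancellation you had in mind, you should make it explicit.) Also, the $a_k$ are not inert: they enter the measure through $a_{r(p)}^{C_p}$ and hence the ratio, which is what produces the shifted row label $a_{r(p)+1}$ in $S_2$.
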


\begin{Remark}
In \cite{Toninelli}, stationarity  of $\pi$ for $q=0$ and all $a_k\equiv 1$ was proven, and actually it was shown that the infinite volume Gibbs measures $\pi_{\rho_1,\rho_2}$, obtained in the limit $L\to\infty,N\to\infty$, $m_1/L\to\rho_1, m_2/N\to \rho_2$, are stationary for the dynamics on the infinite hexagonal lattice. The proof of invariance of $\pi$ on $\Lambda_{L,N}$ in the case $q>0$ is more involved than for $q=0$.
\end{Remark}


\begin{proof}[Proof of Theorem \ref{th:invarianza}]
  Call $\mathcal L$ the Markov generator, then we have to check $[\pi\mathcal L](\eta)=0$ for every configuration $\eta\in \OLN$.
This can be rewritten as
\begin{eqnarray}
  \label{eq:8}
  \sum_{\sigma\ne \eta}\pi(\sigma)\mathcal
  L(\sigma,\eta)+\pi(\eta)\mathcal L(\eta,\eta)=0.
\end{eqnarray}
Since the generator has sum zero on rows, this is equivalent to
\begin{eqnarray}
  \label{eq:9}
  \pi(\eta)\left[
\sum_{\sigma\ne\eta}\frac{\pi(\sigma)}{\pi(\eta)}\mathcal
  L(\sigma,\eta)-\sum_{\sigma\ne\eta}\mathcal L(\eta,\sigma)
\right]=0.
\end{eqnarray}
Now it is obvious from the definition of the dynamics that
\begin{eqnarray}
  \label{eq:10}
  \sum_{\sigma\ne\eta}\mathcal L(\eta,\sigma)=\sum_p a_{r(p)}\, \frac{(1-q^{B_p})(1-q^{D_p+1})}{(1-q^{C_p+1})}=:S_1(\eta).
\end{eqnarray}
On the other hand, we claim (see proof below) that
\begin{eqnarray}
  \label{eq:11}
  \sum_{\sigma\ne\eta}\frac{\pi(\sigma)}{\pi(\eta)}\mathcal
  L(\sigma,\eta)=\sum_p a_{r(p)+1}\frac{(1-q^{A_p+1})(1-q^{E_p})}{(1-q^{F_p+1})}=:S_2(\eta),
\end{eqnarray}
where $r(p)+1$ equals $1$ if $r(p)=N$. Finally, we claim that
\begin{eqnarray}
  \label{eq:12}
  S_1(\eta)=S_2(\eta)\quad \text{for every} \quad \eta\in \Omm.
\end{eqnarray}

\noindent\emph{Proof of \eqref{eq:11}}
An update $\sigma\to\eta$ means that a family $V^+_{p}$ in $\sigma$ has been shifted by $e_1$. Call, as in Definition \ref{def:famiglie}, $r^+_p$ the highest particle in $V^+_{p}$. The reverse move consists in  shifting $V^-_{r^+_p}$ by $-e_1$. Call $\eta_p$ the configuration obtained by $\eta$ by shifting $V^-_p$ by $-e_1$. We see then that the configurations $\eta_p $, with $p$ running over all particles, exhaust the configurations $\sigma$ contributing to \eqref{eq:11}.
Next we claim that
\begin{eqnarray}
  \label{eq:15}
  \frac{\pi(\eta_p)}{\pi(\eta)}\mathcal
  L(\eta_{p},\eta)= a_{r(p)+1} \frac{(1-q^{A_p+1})(1-q^{E_p})}{(1-q^{F_p+1})}.
\end{eqnarray}
If this is true, then \eqref{eq:11} is proven. Label $p_{-\ell+1},\equiv r^-_p,p_{-\ell+2},\dots,p_0\equiv p$ the $\ell=|V^-_p|\le N-1 $ particles that are
shifted left in the move $\eta\to\eta_p$, starting from the lowest one.
We have from \eqref{eq:6},
\begin{eqnarray}
  \label{eq:16}
 \mathcal L(\eta_p,\eta)=a_{r(p_{-\ell+1})} \frac{(1-q^{B_{p_{-\ell+1}}+1})(1-q^{D_{p_{-\ell+1}}})}{1-q^{C_{p_{-\ell+1}}}}.
\end{eqnarray}
As for the ratio of probabilities, one sees that
\begin{align}
  \label{eq:17}
   \frac{\pi(\eta_p)}{\pi(\eta)}&= \frac{a_{r(p_0)+1}}{a_{r(p_{-\ell+1})}} \frac{(q;q)_{F_{p_0}}(q;q)_{C_{p_{-\ell+1}}}}{(q;q)_{F_{p_0}+1}(q;q)_{C_{p_{-\ell+1}}-1}}\\
  \nonumber&\,\,\,\,\, \times\prod_{j=-\ell+1}^{0}\frac{(q;q)_{A_{p_j}+1}(q;q)_{D_{p_j}-1}}{(q;q)_{A_{p_j}}(q;q)_{D_{p_j}}}\frac{(q;q)_{E_{p_j}}(q;q)_{B_{p_j}}}{(q;q)_{E_{p_j}-1}(q;q)_{B_{p_j}+1}}\\
  \nonumber&=\frac{a_{r(p_0)+1}}{a_{r(p_{-\ell+1})}} \frac{(1-q^{C_{p_{-\ell+1}}})}{(1-q^{F_{p_0}+1})}\prod_{j=-\ell+1}^{0}\frac{(1-q^{A_{p_j}+1})(1-q^{E_{p_j}})}{(1-q^{D_{p_j}})(1-q^{B_{p_j}+1})}.
\end{align}
Note that the $F$'s and $C$'s of intermediate particles do not appear
because they are exactly zero, both in $\eta$ and in $\eta_p$.
Also, note that
$B_{p_0}=A_{p_1},\dots, B_{p_{-\ell+2}}=A_{p_{-\ell+1}}$ and
$E_{p_{-1}}=D_{p_0},\dots,E_{p_{-\ell+1}}=D_{p_{-\ell+2}}$.
Therefore, \eqref{eq:17} simplifies into
\begin{gather}
  \label{eq:18}
 \frac{ a_{r(p_0)+1}}{a_{r(p_{-\ell+1})}}\,\frac{(1-q^{C_{p_{-\ell+1}}}) (1-q^{A_{p_0}+1})(1-q^{E_{p_0}})}{(1-q^{B_{p_{-\ell+1}}+1})(1-q^{D_{p_{-\ell+1}}}) (1-q^{F_{p_0}+1})}.
\end{gather}
Once we multiply \eqref{eq:16} by \eqref{eq:18} and we recall that
$p_0=p$, we obtain \eqref{eq:15} and therefore
\eqref{eq:11}.
\end{proof}

\begin{proof}[Proof of \eqref{eq:12}]
It is sufficient to prove that $S_1-S_2$ is independent of $\eta$. In this case, it must be zero because $\sum_\eta[ \pi\mathcal L](\eta)=0$ by conservation of the probability.
Thanks to Lemma \ref{lemma1} it is sufficient to show that $S_1-S_2$ is unchanged when a single particle is moved by $e_1$. The sums in $S_1,S_2$ make sense also when horizontal particle positions are real numbers; given a particle $p$ that can be moved by $e_1$, we let $\eta(s)$ be the configuration where $p$ is moved by $se_1,s\in [0,1]$ and we prove that $\partial_s (S_1-S_2)=0$. 
Note that not only the values of $A_p,\dots,F_p$ depend on $s$, but also  $D_{p_1},E_{p_2},F_{p_3},A_{p_4},B_{p_5},C_{p_6}$ do. We find by direct computation
\begin{gather}
   \frac{\partial_s S_1}{\log q}= a_{r(p)} S_{10} + a_{r(p)+1}S_{11}, \quad \text{and}\quad
  \frac{\partial_s S_2}{\log q}= a_{r(p)} S_{20} + a_{r(p)+1}S_{21},
\end{gather}
where
\begin{align}
S_{10} &= q^{B_p}\frac{1-q^{D_p+1}}{1-q^{C_p+1}}-q^{D_p+1}\frac{1-q^{B_p}}{1-q^{C_p+1}}\\
\nonumber &\,\,\,\,\,+q^{C_p+1}\frac{(1-q^{B_p})(1-q^{D_p+1})}{(1-q^{C_p+1})^2}+q^{D_{p_1}+1}\frac{1-q^{B_{p_1}}}{1-q^{C_{p_1}+1}},\\
S_{11} &= -q^{C_{p_6}+1}\frac{(1-q^{D_{p_6}+1})(1-q^{B_{p_6}})}{(1-q^{C_{p_6}+1})^2}-q^{B_{p_5}}\frac{1-q^{D_{p_5}+1}}{1-q^{C_{p_5}+1}},\\
S_{20} &= q^{F_{p_3}+1}\frac{(1-q^{A_{p_3}+1})(1-q^{E_{p_3}})}{(1-q^{F_{p_3}+1})^2}+q^{E_{p_2}}\frac{1-q^{A_{p_2}+1}}{1-q^{F_{p_2}+1}},\\
S_{21} &= q^{A_p+1}\frac{1-q^{E_p}}{1-q^{F_p+1}}-q^{E_p}\frac{1-q^{A_p+1}}{1-q^{F_p+1}}\\
\nonumber &\,\,\,\,\,-q^{F_p+1}\frac{(1-q^{A_p+1})(1-q^{E_p})}{(1-q^{F_p+1})^2}-q^{A_{p_4}+1}\frac{1-q^{E_{p_4}}}{1-q^{F_{p_4}+1}}.
\end{align}
Using $D_{p_1}=A_p,
C_{p_1}=A_p-B_b,C_{p_6}=F_p,D_{p_6}=E_p+F_p,B_{p_6}=A_p-F_p,B_{p_5}=E_p,C_{p_5}=D_p-E_p$
we get
\begin{align}
  \label{eq:14bis}
S_{10} &= q^{B_p}\frac{1-q^{D_p+1}}{1-q^{C_p+1}}-q^{D_p+1}\frac{1-q^{B_p}}{1-q^{C_p+1}}\\
&\nonumber\,\,\,\,\,+q^{C_p+1}\frac{(1-q^{B_p})(1-q^{D_p+1})}{(1-q^{C_p+1})^2}
+q^{A_{p}+1}\frac{1-q^{B_{p_1}}}{1-q^{A_p-B_p+1}}\\
S_{11}&=-q^{F_{p}+1}\frac{(1-q^{F_p+E_p+1})(1-q^{A_p-F_p})}{(1-q^{F_{p}+1})^2}-q^{E_p}\frac{1-q^{D_{p_5}+1}}{1-q^{D_p-E_p+1}}
\end{align}
while using $A_{p_4}=D_p,E_{p_4}=E_p+D_{p_5}-D_p,F_{p_4}=D_p-E_p,F_{p_3}=C_p,A_{p_3}=B_p+C_p,E_{p_3}=D_p-C_p,A_{p_2}=A_p+B_{p_1}-B_p,E_{p_2}=B_p,F_{p_2}=A_p-B_p, $
\begin{align}
  \label{eq:13bis}
S_{20}&=q^{C_p+1}\frac{(1-q^{B_p+C_p+1})(1-q^{D_p-C_p})}{(1-q^{C_p+1})^2}+q^{B_p}\frac{1-q^{A_p+B_{p_1}-B_p+1}}{1-q^{A_p-B_p+1}}\\
S_{21}&=q^{A_p+1}\frac{1-q^{E_p}}{1-q^{F_p+1}}-q^{E_p}\frac{1-q^{A_p+1}}{1-q^{F_p+1}}\\
&\nonumber\,\,\,\,\,-q^{F_p+1}\frac{(1-q^{A_p+1})(1-q^{E_p})}{(1-q^{F_p+1})^2}-q^{D_{p}+1}\frac{1-q^{E_p+D_{p_5}-D_p}}{1-q^{D_p-E_p+1}}
\end{align}
Note that, when taking the difference $S_{10}-S_{20}$, the dependence
on $B_{p_1}$ cancels and likewise for the difference $S_{11}-S_{21}$
the dependence on $D_{p_5}$ cancels. Finally, one checks that each of
these differences, which a priori depends on $q,A_p,\dots,F_p$, is actually identically zero and \eqref{eq:12} is proven. This concludes the proof of Theorem \ref{th:invarianza}.
\end{proof}




\section*{Acknowledgments}

I. C. was partially supported by the NSF DMS-1208998, by a Clay Research Fellowship, by the Poincar\'{e} Chair, and by a Packard Fellowship for Science and Engineering. F. T. was partially funded by Marie Curie IEF Action “DMCP- Dimers, Markov chains and Critical Phenomena”, grant agreement n. 621894. This work was initiated during the Limit Shapes conference at ICERM and was further discussed at the Statistical Mechanics, Integrability and Combinatorics program at GGI and during Minerva Foundation lectures at Columbia University, delivered by F. T. We appreciate the hospitality and support of these institutes.

\end{document}